\newtheorem{theorem}{Theorem}[section]
\begin{document}
\title[Inverse problem for retarded differential operator]{Traces and
inverse nodal problems for a class of Sturm-Liouville operators with
retarded argument}
\author{Erdo\u{g}an \c{S}en}
\address{Department of Mathematics, Tekirdag Nam\i k Kemal University,
59030, Tekirda\u{g}, Turkey}
\email{erdogan.math@gmail.com}

\begin{abstract}
In this study, we investigate the traces and solutions of inverse nodal
problems of discontinuous Sturm-Liouville operators with retarded argument
and with a finite number of transmission conditions.

\noindent \textsc{2010 Mathematics Subject Classification.} 34B24, 47A10,
47A55.

\vspace{2mm}

\noindent \textsc{Keywords and phrases.} Differential equation with retarded
argument; transmission conditions; regularized trace; nodal points; inverse
problem.
\end{abstract}

\maketitle



\section{Introduction}


Sturm-Liouville problems with transmission conditions (also known as
interface conditons, discontinuity conditions, impulse effects) arise in
many applications. Amongst the applications are thermal conduction in a thin
laminated plate made up of layers of different materials and diffraction
problems. The main goal of this paper is to extent and generalize some
approaches and results of this kind of boundary value problems to similar
types of problems but with eigenvalue-parameter dependent boundary
conditions. To this aim, we calculate regularized traces and solve inverse
nodal problems of a class of Sturm-Liouville operators with retarded
argument and with a finite number of transmission conditions.

Namely, we consider the boundary value problem for the differential equation%
\begin{equation}
y^{\prime \prime }(t)+q(t)y(t-\Delta (t))+\mu ^{2}y(t)=0  \tag{1}
\end{equation}%
on $\Omega =\cup _{j=0}^{m}\Omega _{j}$ ($\Omega _{0}=\left[ 0,\theta
_{1}\right) ,$ $\Omega _{i}=\left( \theta _{i},\theta _{i+1}\right) $ $%
\left( i=\overline{1,m-1}\right) ,$ $\Omega _{m}=\left( \theta _{m},\pi %
\right] $) with boundary conditions%
\begin{equation}
\alpha _{1}^{-}y(0)-\alpha _{2}^{-}y^{\prime }\left( 0\right) +\mu \left(
\alpha _{1}^{+}y(0)-\alpha _{2}^{+}y^{\prime }\left( 0\right) \right) =0, 
\tag{2}
\end{equation}%
\begin{equation}
\beta _{1}^{-}y(\pi )-\beta _{2}^{-}y^{\prime }\left( \pi \right) +\mu
\left( \beta _{1}^{+}y(\pi )-\beta _{2}^{+}y^{\prime }\left( \pi \right)
\right) =0  \tag{3}
\end{equation}%
and transmission conditions%
\begin{equation}
y\left( \theta _{i}-\right) -\delta _{i}y(\theta _{i}+)=0,  \tag{4}
\end{equation}%
\begin{equation}
y^{\prime }\left( \theta _{i}-\right) -\delta _{i}y^{\prime }(\theta
_{i}+)=0,  \tag{5}
\end{equation}%
where the real-valued function $q\left( t\right) $ is continuous in $\Omega $
and has finite limits%
\begin{equation*}
q\left( \theta _{i}\pm \right) =\lim_{t\rightarrow \theta _{i}\pm }q\left(
t\right) ,
\end{equation*}%
the real-valued function $\Delta (t)\geq 0$ is continuous in $\Omega $ and
has finite limits%
\begin{equation*}
\Delta \left( \theta _{i}\pm \right) =\lim_{t\rightarrow \theta _{i}\pm
}\Delta \left( t\right) ,
\end{equation*}%
\textit{if}$\ t\in \Omega _{1}$ then $t-\Delta (t)\geq 0$; \textit{if} $t\in
\Omega _{i}$ then $t-\Delta (t)\geq \theta _{i}$ $(i=\overline{2,m})$; $\mu $
is a spectral parameter; $\alpha _{j}^{\pm },\beta _{j}^{\pm }$ $\left(
j=1,2\right) ,$ $\delta _{i}\neq 0,$ $\theta _{i}$ $(i=\overline{1,m})$ are
arbitrary real numbers such that $\theta _{0}=0<\theta _{1}<\theta
_{2}<...\theta _{m}<\theta _{m+1}=\pi $ and $\alpha _{2}^{+}\beta
_{2}^{+}\neq 0$.

We want to also note that some eigenvalue problems encountered in areas of
data mining requires the investigation of traces of operators and matrices
optimizing the certain properties of given input high-dimensional data (see
[1,2]).


\section{The regularized trace}


Let $\varphi _{1}(t,\mu )$ be a solution of Eq. (1) on $\left[ 0,\theta _{1}%
\right] ,$ satisfying the initial conditions%
\begin{equation}
\varphi _{1}\left( 0,\mu \right) =\mu \alpha _{2}^{+}+\alpha _{2}^{-}\text{ 
\textit{and} }\varphi _{1}^{\prime }\left( 0,\mu \right) =\mu \alpha
_{1}^{+}+\alpha _{1}^{-}\text{.}  \tag{6}
\end{equation}%
The conditions (6) define a unique solution of Eq. (1) on $\Omega _{1}\cup
\theta _{1}$ (see [3,4]).

After defining the above solution, then we will define the solution $\varphi
_{i}\left( t,\mu \right) $ of Eq. (1) on $\Omega _{i}\cup \left\{ \theta
_{i},\theta _{i+1}\right\} $ $\left( i=\overline{2,m}\right) $ by means of
the solution $\varphi _{1}\left( t,\mu \right) $ using the initial conditions%
\begin{equation}
\varphi _{i+1}\left( \theta _{i},\mu \right) =\frac{\varphi _{i}\left(
\theta _{i},\mu \right) }{\delta _{i}}\text{ \textit{and}}\quad \varphi
_{i+1}^{\prime }(\theta _{i},\mu )=\frac{\varphi _{i}^{\prime }(\theta
_{i},\mu )}{\delta _{i}}\text{.}  \tag{7}
\end{equation}%
The conditions (7) define a unique solution of Eq. (1) on $\Omega _{i}\cup
\left\{ \theta _{i},\theta _{i+1}\right\} $ $\left( i=\overline{2,m}\right) $%
. (see [5-10]).

Consequently, the function $\varphi \left( t,\mu \right) $ is defined on $%
\Omega $ by the equality%
\begin{equation*}
\varphi (t,\mu )=\varphi _{i+1}(t,\mu ),\text{ }t\in \Omega _{i}\text{ }%
\left( i=\overline{0,m}\right)
\end{equation*}%
is a solution of (1) on $\Omega $, which satisfies one of the boundary
conditions and the transmission conditions (4)-(5) Then the following
integral equations hold:\ 
\begin{align}
\varphi _{1}(t,\mu )& =\left( \mu \alpha _{2}^{+}+\alpha _{2}^{-}\right)
\cos \mu t-\frac{\mu \alpha _{1}^{+}+\alpha _{1}^{-}}{\mu }\sin \mu t  \notag
\\
& -\frac{1}{\mu }\int\limits_{0}^{{t}}q\left( \tau \right) \sin \mu \left(
t-\tau \right) \varphi _{1}\left( \tau -\Delta \left( \tau \right) ,\mu
\right) d\tau ,  \tag{8}
\end{align}%
\begin{align}
\varphi _{i+1}(t,\mu )& =\frac{1}{\delta _{i}}\varphi _{i}\left( \theta
_{i},\mu \right) \cos \mu \left( t-\theta _{i}\right) +\frac{\varphi
_{i}^{\prime }\left( \theta _{i},\mu \right) }{\mu \delta _{i}}\sin \mu
\left( t-\theta _{i}\right)  \notag \\
& -\frac{1}{\mu }\int\limits_{\theta _{i}}^{{t}}q\left( \tau \right) \sin
\mu \left( t-\tau \right) \varphi _{i+1}\left( \tau -\Delta \left( \tau
\right) ,\mu \right) d\tau ,\text{ }\left( i=\overline{1,m}\right) .  \tag{9}
\end{align}%
Solving the equations (8)-(9) by the method of successive approximation, we
obtain the following asymptotic equalities for $\left\vert \mu \right\vert
\rightarrow \infty :$%
\begin{equation*}
\varphi _{1}(t,\mu )=\mu \alpha _{2}^{+}\cos \mu t+\alpha _{2}^{-}\cos \mu
t-\alpha _{1}^{+}\sin \mu t-\frac{\alpha _{2}^{+}}{2}\int\limits_{0}^{t}q(%
\tau )\sin \mu \left( t-\Delta \left( \tau \right) \right) d\tau
\end{equation*}%
\begin{equation*}
-\frac{\alpha _{2}^{+}}{2}\int\limits_{0}^{t}q(\tau )\sin \mu \left( t-2\tau
+\Delta \left( \tau \right) \right) d\tau +\frac{1}{\mu }\left[ \alpha
_{1}^{-}\sin \mu t-\frac{\alpha _{2}^{-}}{2}\int\limits_{0}^{t}q(\tau )\sin
\mu \left( t-\Delta \left( \tau \right) \right) d\tau \right.
\end{equation*}%
\begin{equation*}
-\frac{\alpha _{2}^{-}}{2}\int\limits_{0}^{t}q(\tau )\sin \mu \left( t-2\tau
+\Delta \left( \tau \right) \right) d\tau +\frac{\alpha _{1}^{+}}{2}%
\int\limits_{0}^{t}q(\tau )\cos \mu \left( t-\Delta \left( \tau \right)
\right) d\tau
\end{equation*}%
\begin{equation}
\left. -\frac{\alpha _{1}^{+}}{2}\int\limits_{0}^{t}q(\tau )\cos \mu \left(
t-2\tau +\Delta \left( \tau \right) \right) d\tau \right] +O\left( \frac{1}{%
\mu ^{2}}\right) ,  \tag{10}
\end{equation}%
\begin{equation*}
\varphi _{i+1}(t,\mu )=\frac{1}{\prod\limits_{i=1}^{m}\delta _{i}}\left[ \mu
\alpha _{2}^{+}\cos \mu t+\left( \alpha _{2}^{-}\cos \mu t-\alpha
_{1}^{+}\sin \mu t\right) \right.
\end{equation*}%
\begin{equation}
\left. -\frac{\alpha _{2}^{+}}{2}\int\limits_{0}^{t}q(\tau )\sin \mu \left(
t-\Delta \left( \tau \right) \right) d\tau -\frac{\alpha _{2}^{+}}{2}%
\int\limits_{0}^{t}q(\tau )\sin \mu \left( t-2\tau +\Delta \left( \tau
\right) \right) d\tau \right] +O\left( \frac{1}{\mu }\right) .  \tag{11}
\end{equation}

Differentiating (10)-(11) with respect to $t$, we get%
\begin{equation*}
\varphi _{1}^{\prime }(t,\mu )=-\mu ^{2}\alpha _{2}^{+}\sin \mu t+\mu \left[
-\alpha _{2}^{-}\sin \mu t-\alpha _{1}^{+}\cos \mu t-\frac{\alpha _{2}^{+}}{2%
}\int\limits_{0}^{t}q(\tau )\cos \mu \left( t-\Delta \left( \tau \right)
\right) d\tau \right.
\end{equation*}%
\begin{equation*}
\left. -\frac{\alpha _{2}^{+}}{2}\int\limits_{0}^{t}q(\tau )\cos \mu \left(
t-2\tau +\Delta \left( \tau \right) \right) d\tau \right] +\alpha
_{1}^{-}\cos \mu t
\end{equation*}%
\begin{equation*}
-\frac{\alpha _{2}^{-}}{2}\int\limits_{0}^{t}q(\tau )\cos \mu \left(
t-\Delta \left( \tau \right) \right) d\tau -\frac{\alpha _{2}^{-}}{2}%
\int\limits_{0}^{t}q(\tau )\cos \mu \left( t-2\tau +\Delta \left( \tau
\right) \right) d\tau
\end{equation*}%
\begin{equation}
-\frac{\alpha _{1}^{+}}{2}\int\limits_{0}^{t}q(\tau )\sin \mu \left(
t-\Delta \left( \tau \right) \right) d\tau +\frac{\alpha _{1}^{+}}{2}%
\int\limits_{0}^{t}q(\tau )\sin \mu \left( t-2\tau +\Delta \left( \tau
\right) \right) d\tau +O\left( \frac{1}{\mu }\right) ,  \tag{12}
\end{equation}%
\begin{equation}
\varphi _{i+1}^{\prime }(t,\mu )=-\frac{1}{\prod\limits_{i=1}^{m}\delta _{i}}%
\left\{ \alpha _{2}^{+}\mu ^{2}\sin \mu t+\mu \left[ -\alpha _{2}^{-}\sin
\mu t-\alpha _{1}^{+}\cos \mu t-\frac{\alpha _{2}^{+}}{2}\int%
\limits_{0}^{t}q(\tau )\cos \mu \left( t-\Delta \left( \tau \right) \right)
d\tau \right. \right.  \notag
\end{equation}%
\begin{equation}
\left. \left. -\frac{\alpha _{2}^{+}}{2}\int\limits_{0}^{t}q(\tau )\cos \mu
\left( t-2\tau +\Delta \left( \tau \right) \right) d\tau \right] \right\}
+O\left( 1\right) ,\text{ }i=\overline{1,m}.  \tag{13}
\end{equation}%
The solution $\varphi (t,\mu )$ defined above is a nontrivial solution of
(1) satisfying conditions (2) and (4)-(5). Putting$\>\varphi (t,\mu )\>$into
(3), we get the characteristic equation 
\begin{equation}
\Xi (\mu )\equiv \left( \mu \beta _{1}^{+}+\beta _{1}^{-}\right) \varphi
(\pi ,\mu )-\left( \mu \beta _{2}^{+}+\beta _{2}^{-}\right) \varphi ^{\prime
}(\pi ,\mu )=0\text{.}  \tag{14}
\end{equation}%
The set of eigenvalues of boundary value problem (1)-(5) coincides with the
set of the squares of roots of (14), and eigenvalues are simple. From
(10)-(14), we obtain%
\begin{equation*}
\Xi (\mu )\equiv \frac{\mu ^{3}\alpha _{2}^{+}\beta _{2}^{+}}{%
\prod\limits_{i=1}^{m}\delta _{i}}\sin \mu \pi +\frac{\mu ^{2}}{%
\prod\limits_{i=1}^{m}\delta _{i}}\left\{ \left[ \alpha _{2}^{+}\beta
_{1}^{+}+\alpha _{1}^{+}\beta _{2}^{+}\right. \right.
\end{equation*}%
\begin{equation*}
\left. +\frac{\alpha _{2}^{+}\beta _{2}^{+}}{2}\int\limits_{0}^{\pi }q(\tau
)\cos \mu \Delta (\tau )d\tau +\frac{\alpha _{2}^{+}\beta _{2}^{+}}{2}%
\int\limits_{0}^{\pi }q(\tau )\cos \mu \left( 2\tau -\Delta (\tau )\right)
d\tau \right] \cos \mu \pi
\end{equation*}%
\begin{equation*}
+\left[ \alpha _{2}^{-}\beta _{2}^{+}+\alpha _{2}^{+}\beta _{2}^{-}+\frac{%
\alpha _{2}^{+}\beta _{2}^{+}}{2}\int\limits_{0}^{\pi }q(\tau )\sin \mu
\Delta (\tau )d\tau \right.
\end{equation*}%
\begin{equation*}
\left. \left. +\frac{\alpha _{2}^{+}\beta _{2}^{+}}{2}\int\limits_{0}^{\pi
}q(\tau )\sin \mu \left( 2\tau -\Delta (\tau )\right) d\tau \right] \sin \mu
\pi \right\} +O(\mu )
\end{equation*}%
which is deduced to%
\begin{equation}
\Xi \left( \mu \right) =\frac{\mu ^{3}\alpha _{2}^{+}\beta _{2}^{+}}{%
\prod\limits_{i=1}^{m}\delta _{i}}\sin \mu \pi +\frac{\mu ^{2}}{%
\prod\limits_{i=1}^{m}\delta _{i}}\left\{ \left[ \alpha _{2}^{+}\beta
_{1}^{+}+\alpha _{1}^{+}\beta _{2}^{+}\right. \right.  \notag
\end{equation}%
\begin{equation*}
\left. +\alpha _{2}^{+}\beta _{2}^{+}\left( U^{+}(\mu )+V^{+}(\mu \right) 
\right] \cos \mu \pi
\end{equation*}%
\begin{equation}
\left. +\left[ \alpha _{2}^{-}\beta _{2}^{+}+\alpha _{2}^{+}\beta
_{2}^{-}+\alpha _{2}^{+}\beta _{2}^{+}\left( U^{-}(\mu )+V^{-}(\mu \right) %
\right] \sin \mu \pi \right\} +O(\mu ).  \tag{15}
\end{equation}%
Here,%
\begin{equation*}
U^{+}\left( \mu \right) =\frac{1}{2}\int\limits_{0}^{\pi }q(\tau )\cos
\left( \mu \Delta \left( \tau \right) \right) d\tau ,\text{ \ \ \ \ }%
U^{-}\left( \mu \right) =\frac{1}{2}\int\limits_{0}^{\pi }q(\tau )\sin
\left( \mu \Delta \left( \tau \right) \right) d\tau ,
\end{equation*}%
\begin{equation*}
V^{+}\left( \mu \right) =\frac{1}{2}\int\limits_{0}^{\pi }q(\tau )\cos
\left( \mu \left( 2\tau -\Delta \left( \tau \right) \right) \right) d\tau ,%
\text{ \ \ \ \ }V^{-}\left( \mu \right) =\frac{1}{2}\int\limits_{0}^{\pi
}q(\tau )\sin \left( \mu \left( 2\tau -\Delta \left( \tau \right) \right)
\right) d\tau .
\end{equation*}%
Define%
\begin{equation}
\Xi _{0}\left( \mu \right) \equiv \frac{\mu ^{3}\alpha _{2}^{+}\beta _{2}^{+}%
}{\prod\limits_{i=1}^{m}\delta _{i}}\sin \mu \pi ,  \tag{16}
\end{equation}%
and denote by $\mu _{\pm n}^{0},n\in 
\mathbb{Z}
,$ the zeros of the function $\Xi _{0}\left( \mu \right) $, except that zero
is multiplicity 4; then $\mu _{\pm 0}^{0}=\mu _{\pm 1}^{0}=0$ and

\begin{equation*}
\mu _{n}^{0}=\left\{ 
\begin{array}{c}
n-1,\text{ }n\geq 1, \\ 
n+1,\text{ }n\leq -1.%
\end{array}%
\right.
\end{equation*}

Denote by $C_{n}$ the circle of radius, $0<\varepsilon <\frac{1}{2},$
centered at the origin $\mu _{n}^{0}$ and by $\Gamma _{N_{0}}$ the
counterclockwise square contours with four vertices%
\begin{eqnarray*}
A &=&\left( N_{0}-1+\varepsilon \right) \left( 1-i\right) ,\text{ \ \ }%
B=\left( N_{0}-1+\varepsilon \right) \left( 1+i\right) , \\
C &=&\left( N_{0}-1+\varepsilon \right) \left( -1+i\right) ,\text{ \ \ }%
D=\left( N_{0}-1+\varepsilon \right) \left( -1-i\right) ,
\end{eqnarray*}%
where $i=\sqrt{-1}$ and $N_{0}$ is a natural number. Obviously, if $\mu \in
C_{n}$ or $\mu \in \Gamma _{N_{0}},$ then $\left\vert \Xi _{0}\left( \mu
\right) \right\vert \geq M\left\vert \mu \right\vert e^{\left\vert \func{Im}%
\mu \right\vert \pi }$ $\left( M>0\right) $ by using a similar method in
[11]. Thus, on $\mu \in C_{n}$ or $\mu \in \Gamma _{N_{0}},$ from (15) and
(16), we have%
\begin{equation*}
\frac{\Xi \left( \mu \right) }{\Xi _{0}\left( \mu \right) }=1+\frac{1}{\mu }%
\left[ \left( \frac{\beta _{1}^{+}}{\beta _{2}^{+}}+\frac{\alpha _{1}^{+}}{%
\alpha _{2}^{+}}+U^{+}\left( \mu \right) +V^{+}\left( \mu \right) \right)
\cot \mu \pi \right.
\end{equation*}%
\begin{equation*}
\left. +\frac{\alpha _{2}^{-}}{\alpha _{2}^{+}}+\frac{\beta _{2}^{-}}{\beta
_{2}^{+}}+U^{-}\left( \mu \right) +V^{-}\left( \mu \right) \right] +O\left( 
\frac{1}{\mu ^{2}}\right) .
\end{equation*}%
Expanding $\ln \frac{\Xi \left( \mu \right) }{\Xi _{0}\left( \mu \right) }$
by the Maclaurin formula, we find that%
\begin{equation*}
\ln \frac{\Xi \left( \mu \right) }{\Xi _{0}\left( \mu \right) }=\frac{1}{\mu 
}\left[ \left( \frac{\beta _{1}^{+}}{\beta _{2}^{+}}+\frac{\alpha _{1}^{+}}{%
\alpha _{2}^{+}}+U^{+}\left( \mu \right) +V^{+}\left( \mu \right) \right)
\cot \mu \pi \right.
\end{equation*}%
\begin{equation*}
\left. +\frac{\alpha _{2}^{-}}{\alpha _{2}^{+}}+\frac{\beta _{2}^{-}}{\beta
_{2}^{+}}+U^{-}\left( \mu \right) +V^{-}\left( \mu \right) \right]
\end{equation*}%
\begin{equation*}
-\frac{1}{2\mu ^{2}}\left[ \left( \frac{\beta _{1}^{+}}{\beta _{2}^{+}}+%
\frac{\alpha _{1}^{+}}{\alpha _{2}^{+}}+U^{+}\left( \mu \right) +V^{+}\left(
\mu \right) \right) ^{2}\cot ^{2}\mu \pi \right.
\end{equation*}%
\begin{equation*}
+\left( \frac{\alpha _{2}^{-}}{\alpha _{2}^{+}}+\frac{\beta _{2}^{-}}{\beta
_{2}^{+}}+U^{-}\left( \mu \right) +V^{-}\left( \mu \right) \right) ^{2}
\end{equation*}%
\begin{equation*}
+2\left( \frac{\beta _{1}^{+}}{\beta _{2}^{+}}+\frac{\alpha _{1}^{+}}{\alpha
_{2}^{+}}+U^{+}\left( \mu \right) +V^{+}\left( \mu \right) \right)
\end{equation*}%
\begin{equation*}
\left. \times \left( \frac{\alpha _{2}^{-}}{\alpha _{2}^{+}}+\frac{\beta
_{2}^{-}}{\beta _{2}^{+}}+U^{-}\left( \mu \right) +V^{-}\left( \mu \right)
\right) \cot \mu \pi \right] +O\left( \frac{1}{\mu ^{3}}\right) .
\end{equation*}%
Using the well-known Rouche Theorem, we get that $\Xi \left( \mu \right) $
has the same number of zeros inside $\Gamma _{N_{0}}$ as $\Xi _{0}\left( \mu
\right) $ (see [12]). It is easy to prove that the spectrum of problem
(1)-(5) is 
\begin{equation*}
\mu _{n}\sim \mu _{n}^{0}+O\left( \frac{1}{n}\right) \text{ as\ }\left\vert
n\right\vert \rightarrow \infty .
\end{equation*}%
Next, we present the more exact asymptotic distribution of the spectrum.
Using the residue theorem, we have%
\begin{equation*}
\mu _{n}-\mu _{n}^{0}=-\frac{1}{2\pi i}\doint\limits_{C_{n}}\ln \frac{\Xi
\left( \mu \right) }{\Xi _{0}\left( \mu \right) }d\mu
\end{equation*}%
\begin{equation*}
=-\frac{1}{2\pi i}\doint\limits_{C_{n}}\left( \frac{\beta _{1}^{+}}{\beta
_{2}^{+}}+\frac{\alpha _{1}^{+}}{\alpha _{2}^{+}}+U^{+}\left( \mu \right)
+V^{+}\left( \mu \right) \right) \frac{\cot \left( \mu \pi \right) }{\mu }%
d\mu
\end{equation*}%
\begin{equation*}
-\frac{1}{2\pi i}\doint\limits_{C_{n}}\left( \frac{\alpha _{2}^{-}}{\alpha
_{2}^{+}}+\frac{\beta _{2}^{-}}{\beta _{2}^{+}}+U^{-}\left( \mu \right)
+V^{-}\left( \mu \right) \right) \frac{d\mu }{\mu }+O\left( \frac{1}{n^{2}}%
\right)
\end{equation*}%
\begin{equation*}
=-\frac{1}{\mu _{n}^{0}\pi }\left( \frac{\beta _{1}^{+}}{\beta _{2}^{+}}+%
\frac{\alpha _{1}^{+}}{\alpha _{2}^{+}}+U^{+}\left( n\right) +V^{+}\left(
n\right) \right) +O\left( \frac{1}{n^{2}}\right) .
\end{equation*}%
Thus we have proven the following theorem.

\begin{theorem}
The spectrum of the problem (1)-(5) has the%
\begin{equation}
\mu _{n}=\mu _{n}^{0}-\frac{1}{\mu _{n}^{0}\pi }\left( \frac{\beta _{1}^{+}}{%
\beta _{2}^{+}}+\frac{\alpha _{1}^{+}}{\alpha _{2}^{+}}+U^{+}\left( \mu
_{n}^{0}\right) +V^{+}\left( \mu _{n}^{0}\right) \right) +O\left( \frac{1}{%
\left( \mu _{n}^{0}\right) ^{2}}\right)  \tag{17}
\end{equation}%
asymptotic distribution for sufficiently large $\left\vert n\right\vert .$
\end{theorem}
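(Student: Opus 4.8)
The plan is to realize the eigenvalues as the squares of the zeros of the entire characteristic function $\Xi(\mu)$ from (14), and to locate those zeros by comparing $\Xi$ with the explicit model function $\Xi_0$ in (16), whose zeros $\mu_n^0$ are known in closed form. Since the eigenvalues of (1)--(5) are simple and coincide with the squares of the roots of $\Xi$, it suffices to establish the asymptotics (17) for the roots $\mu_n$ themselves.

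I would take as given the successive-approximation asymptotics (10)--(13) for $\varphi_1,\varphi_1',\varphi_{i+1},\varphi_{i+1}'$ obtained from the integral equations (8)--(9); substituting their values at $t=\pi$ into (14) produces $\Xi$ in the reduced form (15), in which the leading cubic term is exactly $\Xi_0(\mu)$ and the lower-order part is organized through the oscillatory transforms $U^{\pm},V^{\pm}$. Next I would invoke the lower bound $\left|\Xi_0(\mu)\right|\geq M\left|\mu\right|e^{\left|\operatorname{Im}\mu\right|\pi}$ valid on the circles $C_n$ and on the square contours $\Gamma_{N_0}$; this makes the ratio $\Xi/\Xi_0$ well defined and equal to $1+O(1/\mu)$ on those contours. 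Rouch\'e's theorem on $\Gamma_{N_0}$ then forces $\Xi$ to have the same number of zeros inside each contour as $\Xi_0$, which already yields the crude localization $\mu_n=\mu_n^0+O(1/n)$ and, in particular, that exactly one simple zero $\mu_n$ of $\Xi$ lies inside each $C_n$.

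To sharpen this to (17) I would apply the argument principle on $C_n$. Since $\mu_n$ and $\mu_n^0$ are the unique (simple) zeros of $\Xi$ and $\Xi_0$ inside $C_n$, one has
\[
\mu_n-\mu_n^0=\frac{1}{2\pi i}\oint_{C_n}\mu\,\frac{d}{d\mu}\ln\frac{\Xi(\mu)}{\Xi_0(\mu)}\,d\mu=-\frac{1}{2\pi i}\oint_{C_n}\ln\frac{\Xi(\mu)}{\Xi_0(\mu)}\,d\mu,
\]
the second equality following by integration by parts, which is legitimate because $\Xi/\Xi_0$ stays near $1$ so that $\ln(\Xi/\Xi_0)$ is single-valued and analytic in a neighbourhood of $C_n$. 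Inserting the Maclaurin expansion of $\ln(\Xi/\Xi_0)$ and retaining the $1/\mu$ term, the contribution containing $\cot\mu\pi$ is evaluated by residues at the integer point $\mu_n^0$, where $\cot\mu\pi$ has a simple pole with residue $1/\pi$; this produces the stated main correction $-\frac{1}{\mu_n^0\pi}\bigl(\tfrac{\beta_1^+}{\beta_2^+}+\tfrac{\alpha_1^+}{\alpha_2^+}+U^+(\mu_n^0)+V^+(\mu_n^0)\bigr)$. The remaining $1/\mu$ contribution has an entire numerator and hence no pole inside $C_n$, so its integral vanishes; the genuine $O(1/n^2)$ error then comes only from the higher-order terms of the expansion.

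The main obstacle I anticipate is controlling the remainders honestly and uniformly. Two points deserve care. First, one must verify that the $O(1/\mu^2)$ and $O(1/\mu^3)$ remainders in $\Xi/\Xi_0$ and in $\ln(\Xi/\Xi_0)$ are uniform for $\mu\in C_n$ as $|n|\to\infty$; this is exactly where the exponential lower bound on $\Xi_0$ and the boundedness of $U^{\pm},V^{\pm}$ on $C_n$ are needed, so that the corresponding contour integrals are truly $O(1/n^2)$. Second, in passing from $U^{\pm}(\mu),V^{\pm}(\mu)$ on $C_n$ to $U^{\pm}(\mu_n^0),V^{\pm}(\mu_n^0)$ one must confirm that the replacement costs only $O(1/n^2)$; this follows from the boundedness and Lipschitz dependence in $\mu$ of these oscillatory integrals together with $\mu=\mu_n^0+O(\varepsilon)$ on $C_n$. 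Once these uniformities are secured, assembling the single residue computation yields (17) for all sufficiently large $|n|$.
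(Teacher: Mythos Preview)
Your proposal is correct and follows essentially the same route as the paper: compare $\Xi$ with $\Xi_0$, use the lower bound on $|\Xi_0|$ along $C_n$ and $\Gamma_{N_0}$, apply Rouch\'e for the crude localization, and then compute $\mu_n-\mu_n^0=-\frac{1}{2\pi i}\oint_{C_n}\ln(\Xi/\Xi_0)\,d\mu$ by expanding the logarithm and picking up the residue of $\cot(\mu\pi)/\mu$ at $\mu_n^0$. Your additional remarks on single-valuedness of $\ln(\Xi/\Xi_0)$, uniformity of the $O(1/\mu^2)$ remainders on $C_n$, and the passage from $U^{\pm}(\mu),V^{\pm}(\mu)$ to their values at $\mu_n^0$ are points the paper leaves implicit, so they only strengthen the argument.
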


Finally, following [12,13], we will get regularized trace formula for the
problem (1)-(5).

The asymptotic formula (17) for the eigenvalues implies that for all
sufficiently large $N_{0},$ the numbers $\mu _{n}$ with $\left\vert
n\right\vert \leq N_{0}$ are inside $\Gamma _{N_{0}},$ and the numbers $\mu
_{n}$ with $\left\vert n\right\vert >N_{0}$ are outside $\Gamma _{N_{0}}.$
It follows that%
\begin{equation*}
\dsum\limits_{\Gamma _{n}}\left( \mu _{n}^{2}-\left( \mu _{n}^{0}\right)
^{2}\right) =\mu _{-0}^{2}+\mu _{0}^{2}+\sum_{0\neq n=-N_{0}}^{N_{0}}\left(
\mu _{n}^{2}-\left( \mu _{n}^{0}\right) ^{2}\right) =-\frac{1}{\pi i}%
\doint\limits_{\Gamma _{n}}\mu \ln \frac{\Xi \left( \mu \right) }{\Xi
_{0}\left( \mu \right) }d\mu
\end{equation*}%
\begin{equation*}
=-\frac{1}{\pi i}\doint\limits_{\Gamma _{n}}\left( \frac{\beta _{1}^{+}}{%
\beta _{2}^{+}}+\frac{\alpha _{1}^{+}}{\alpha _{2}^{+}}+U^{+}\left( \mu
\right) +V^{+}\left( \mu \right) \right) \cot \left( \mu \pi \right) d\mu
\end{equation*}%
\begin{equation*}
-\frac{1}{\pi i}\doint\limits_{\Gamma _{n}}\left( \frac{\alpha _{2}^{-}}{%
\alpha _{2}^{+}}+\frac{\beta _{2}^{-}}{\beta _{2}^{+}}+U^{-}\left( \mu
\right) +V^{-}\left( \mu \right) \right) d\mu
\end{equation*}%
\begin{equation*}
+\frac{1}{2\pi i}\doint\limits_{\Gamma _{n}}\left( \frac{\beta _{1}^{+}}{%
\beta _{2}^{+}}+\frac{\alpha _{1}^{+}}{\alpha _{2}^{+}}+U^{+}\left( \mu
\right) +V^{+}\left( \mu \right) \right) ^{2}\frac{\cot ^{2}\left( \mu \pi
\right) }{\mu }d\mu
\end{equation*}%
\begin{equation*}
+\frac{1}{2\pi i}\doint\limits_{\Gamma _{n}}\left( \frac{\alpha _{2}^{-}}{%
\alpha _{2}^{+}}+\frac{\beta _{2}^{-}}{\beta _{2}^{+}}+U^{-}\left( \mu
\right) +V^{-}\left( \mu \right) \right) ^{2}\frac{d\mu }{\mu }
\end{equation*}%
\begin{equation*}
+\frac{1}{\pi i}\doint\limits_{\Gamma _{n}}\left( \frac{\beta _{1}^{+}}{%
\beta _{2}^{+}}+\frac{\alpha _{1}^{+}}{\alpha _{2}^{+}}+U^{+}\left( \mu
\right) +V^{+}\left( \mu \right) \right)
\end{equation*}%
\begin{equation*}
\times \left( \frac{\alpha _{2}^{-}}{\alpha _{2}^{+}}+\frac{\beta _{2}^{-}}{%
\beta _{2}^{+}}+U^{-}\left( \mu \right) +V^{-}\left( \mu \right) \right) 
\frac{\cot \left( \mu \pi \right) }{\mu }d\mu +O\left( \frac{1}{N_{0}}%
\right) ,
\end{equation*}%
by calculations, which implies that%
\begin{equation*}
\mu _{-0}^{2}+\mu _{0}^{2}+\sum_{0\neq n=-N_{0}}^{N_{0}}\left( \mu
_{n}^{2}-\left( \mu _{n}^{0}\right) ^{2}+\frac{4}{\pi }\left( \frac{\beta
_{1}^{+}}{\beta _{2}^{+}}+\frac{\alpha _{1}^{+}}{\alpha _{2}^{+}}%
+U^{+}\left( n\right) +V^{+}\left( n\right) \right) \right)
\end{equation*}%
\begin{equation*}
=-\frac{2}{\pi }\left( \frac{\beta _{1}^{+}}{\beta _{2}^{+}}-\frac{\alpha
_{1}^{+}}{\alpha _{2}^{+}}+U^{+}\left( 0\right) +V^{+}\left( 0\right)
\right) -\left( \frac{\beta _{1}^{+}}{\beta _{2}^{+}}+\frac{\alpha _{1}^{+}}{%
\alpha _{2}^{+}}+U^{+}\left( 0\right) +V^{+}\left( 0\right) \right) ^{2}
\end{equation*}%
\begin{equation}
+\left( \frac{\alpha _{2}^{-}}{\alpha _{2}^{+}}+\frac{\beta _{2}^{-}}{\beta
_{2}^{+}}+U^{-}\left( 0\right) +V^{-}\left( 0\right) \right) ^{2}+O\left( 
\frac{1}{N_{0}}\right) .  \tag{18}
\end{equation}%
Passing to the limit as $N_{0}\rightarrow \infty $ in (18), we have%
\begin{equation*}
\mu _{-0}^{2}+\mu _{0}^{2}+\sum_{0\neq n=-\infty }^{+\infty }\left( \mu
_{n}^{2}-\left( \mu _{n}^{0}\right) ^{2}+\frac{4}{\pi }\left( \frac{\beta
_{1}^{+}}{\beta _{2}^{+}}+\frac{\alpha _{1}^{+}}{\alpha _{2}^{+}}%
+U^{+}\left( \mu _{n}^{0}\right) +V^{+}\left( \mu _{n}^{0}\right) \right)
\right)
\end{equation*}%
\begin{equation}
=-\frac{2}{\pi }\left( \frac{\beta _{1}^{+}}{\beta _{2}^{+}}+\frac{\alpha
_{1}^{+}}{\alpha _{2}^{+}}+U^{+}\left( 0\right) +V^{+}\left( 0\right)
\right) -\left( \frac{\beta _{1}^{+}}{\beta _{2}^{+}}+\frac{\alpha _{1}^{+}}{%
\alpha _{2}^{+}}+U^{+}\left( 0\right) +V^{+}\left( 0\right) \right)
^{2}+\left( \frac{\alpha _{2}^{-}}{\alpha _{2}^{+}}+\frac{\beta _{2}^{-}}{%
\beta _{2}^{+}}\right) ^{2}.  \tag{19}
\end{equation}%
The series on the left side of (19) is called the regularized trace of the
problem (1)-(5).

We want to note that the trace formulas for different types of boundary
value problems with retarded argument obtained in [12-17] and approximate
calculation of the eigenvalues of the problem (1)-(5) can also be obtained
via formula (19) (see [18-20]).

\section{The inverse problem}

Inverse nodal problems for differential operators with or without retarded
argument were investigated by a number of authors( see [13, 16, 17, 21-27]
and the references therein). In this chapter, following [12], we deal with
inverse spectral analysis of the problem (1)-(5) using the nodal points
(zeros) of its eigenfunctions.

Let us rewrite the equation (8) as%
\begin{equation*}
\varphi _{1}(t,\mu )=\left( \mu \alpha _{2}^{+}+\alpha _{2}^{-}\right) \cos
\mu t-\frac{1}{\mu }\left( \mu \alpha _{1}^{+}+\alpha _{1}^{-}\right) \sin
\mu t
\end{equation*}%
\begin{equation*}
-\frac{\alpha _{2}^{+}}{2}\int\limits_{0}^{t}q(\tau )\left[ \sin \mu \left(
t-2\tau +\Delta (\tau )\right) +\sin \mu \left( t-\Delta (\tau )\right) %
\right] d\tau
\end{equation*}%
\begin{equation*}
-\frac{\alpha _{2}^{-}}{2\mu }\int\limits_{0}^{t}q(\tau )\left[ \sin \mu
\left( t-2\tau +\Delta (\tau )\right) +\sin \mu \left( t-\Delta (\tau
)\right) \right] d\tau
\end{equation*}%
\begin{equation*}
-\frac{\alpha _{1}^{+}}{2\mu }\int\limits_{0}^{t}q(\tau )\left[ \cos \mu
\left( t-2\tau +\Delta (\tau )\right) -\cos \mu \left( t-\Delta (\tau
)\right) \right] d\tau +O\left( \frac{1}{\mu ^{2}}\right) ,
\end{equation*}%
and using the fact that 
\begin{equation*}
\int\limits_{0}^{t}q(\tau )\sin \mu \left( 2\tau -\Delta (\tau )\right)
d\tau =\int\limits_{0}^{t}q(\tau )\cos \mu \left( 2\tau -\Delta (\tau
)\right) d\tau =O\left( \frac{1}{\mu }\right) ,
\end{equation*}%
(see [3, Lemma 2.3.3]) it yields that%
\begin{equation*}
\varphi _{1}(t,\mu )=\mu \alpha _{2}^{+}\cos \mu t+\alpha _{2}^{-}\cos \mu
t-\alpha _{1}^{+}\sin \mu t-\frac{\alpha _{1}^{-}}{\mu }\sin \mu t
\end{equation*}%
\begin{equation*}
-\frac{\alpha _{2}^{+}\cos \mu t}{2}\int\limits_{0}^{t}q(\tau )\sin \mu
\left( \Delta (\tau )\right) d\tau -\frac{\alpha _{2}^{+}\sin \mu t}{2}%
\int\limits_{0}^{t}q(\tau )\cos \mu \left( \Delta (\tau )\right) d\tau
\end{equation*}%
\begin{equation*}
-\frac{\alpha _{2}^{-}\cos \mu t}{2\mu }\int\limits_{0}^{t}q(\tau )\sin \mu
\left( \Delta (\tau )\right) d\tau -\frac{\alpha _{2}^{-}\sin \mu t}{2\mu }%
\int\limits_{0}^{t}q(\tau )\cos \mu \left( \Delta (\tau )\right) d\tau
\end{equation*}%
\begin{equation*}
+\frac{\alpha _{1}^{+}\cos \mu t}{2\mu }\int\limits_{0}^{t}q(\tau )\sin \mu
\left( \Delta (\tau )\right) d\tau +\frac{\alpha _{1}^{+}\sin \mu t}{2\mu }%
\int\limits_{0}^{t}q(\tau )\cos \mu \left( \Delta (\tau )\right) d\tau
+O\left( \frac{1}{\mu ^{2}}\right) ,
\end{equation*}%
\begin{equation*}
=\mu _{n}\alpha _{2}^{+}\cos \mu _{n}t+\alpha _{2}^{-}\cos \mu _{n}t-\alpha
_{1}^{+}\sin \mu _{n}t
\end{equation*}%
\begin{equation*}
-\frac{\alpha _{1}^{-}}{\mu _{n}}\sin \mu _{n}t-\frac{\alpha _{2}^{+}}{2}%
\int\limits_{0}^{t}q(\tau )\sin \mu _{n}\left( t-\Delta (\tau )\right) d\tau
-\frac{\alpha _{2}^{-}}{2\mu _{n}}\int\limits_{0}^{t}q(\tau )\sin \mu
_{n}\left( t-\Delta (\tau )\right) d\tau
\end{equation*}%
\begin{equation*}
+\frac{\alpha _{1}^{+}}{2\mu _{n}}\int\limits_{0}^{t}q(\tau )\cos \mu
_{n}\left( t-\Delta (\tau )\right) d\tau +O\left( \frac{1}{\mu ^{2}}\right)
\end{equation*}%
\begin{equation*}
=\mu _{n}\alpha _{2}^{+}\cos \mu _{n}t+\alpha _{2}^{-}\cos \mu _{n}t-\alpha
_{1}^{+}\sin \mu _{n}t-\frac{\alpha _{1}^{-}}{\mu _{n}}\sin \mu _{n}t
\end{equation*}%
\begin{equation*}
-\frac{\alpha _{2}^{+}\sin \mu _{n}t}{2}\int\limits_{0}^{t}q(\tau )\cos
\left( \mu _{n}\Delta (\tau )\right) d\tau -\frac{\alpha _{2}^{+}\cos \mu
_{n}t}{2}\int\limits_{0}^{t}q(\tau )\sin \left( \mu _{n}\Delta (\tau
)\right) d\tau
\end{equation*}%
\begin{equation*}
-\frac{\alpha _{2}^{-}\sin \mu _{n}t}{2\mu _{n}}\int\limits_{0}^{t}q(\tau
)\cos \left( \mu _{n}\Delta (\tau )\right) d\tau -\frac{\alpha _{2}^{-}\cos
\mu _{n}t}{2\mu _{n}}\int\limits_{0}^{t}q(\tau )\sin \left( \mu _{n}\Delta
(\tau )\right) d\tau
\end{equation*}%
\begin{equation*}
+\frac{\alpha _{1}^{+}\sin \mu _{n}t}{2\mu _{n}}\int\limits_{0}^{t}q(\tau
)\sin \left( \mu _{n}\Delta (\tau )\right) d\tau +\frac{\alpha _{1}^{+}\cos
\mu _{n}t}{2\mu _{n}}\int\limits_{0}^{t}q(\tau )\cos \left( \mu _{n}\Delta
(\tau )\right) d\tau +O\left( \frac{1}{\mu _{n}^{2}}\right) .
\end{equation*}%
Let us assume that $t_{n}^{j}$ are the nodal points of the eigenfunction $%
\varphi \left( t,\mu _{n}\right) .$ Taking $\sin \left( \mu _{n}t\right)
\neq 0$ into account for sufficiently large $n,$ we get 
\begin{equation*}
T\left( \mu _{n},t\right) \cot \mu _{n}t=\alpha _{1}^{+}+\frac{\alpha
_{1}^{-}}{\mu _{n}}+\frac{\alpha _{2}^{+}}{2}\int\limits_{0}^{t}q(\tau )\cos
\left( \mu _{n}\Delta (\tau )\right) d\tau
\end{equation*}%
\begin{equation*}
+\frac{\alpha _{2}^{-}}{2\mu _{n}}\int\limits_{0}^{t}q(\tau )\cos \left( \mu
_{n}\Delta (\tau )\right) d\tau -\frac{\alpha _{1}^{+}}{2\mu _{n}}%
\int\limits_{0}^{t}q(\tau )\sin \left( \mu _{n}\Delta (\tau )\right) d\tau
+O\left( \frac{1}{\mu _{n}^{2}}\right) .
\end{equation*}%
Here 
\begin{equation*}
T\left( \mu _{n},t\right) =\left\{ \mu _{n}\alpha _{2}^{+}+\alpha _{2}^{-}-%
\frac{\alpha _{2}^{+}}{2}\int\limits_{0}^{t}q(\tau )\sin \left( \mu
_{n}\Delta (\tau )\right) d\tau \right.
\end{equation*}%
\begin{equation*}
\left. -\frac{\alpha _{2}^{-}}{2\mu _{n}}\int\limits_{0}^{t}q(\tau )\sin
\left( \mu _{n}\Delta (\tau )\right) d\tau +\frac{\alpha _{1}^{+}}{2\mu _{n}}%
\int\limits_{0}^{t}q(\tau )\cos \left( \mu _{n}\Delta (\tau )\right) d\tau
\right\}
\end{equation*}%
and it follows easily that 
\begin{equation}
\tan \left( \mu _{n}t+\frac{\pi }{2}\right) =\frac{\alpha _{1}^{+}}{T\left(
\mu _{n},t\right) }+\frac{\alpha _{2}^{+}}{2\mu _{n}T\left( \mu
_{n},t\right) }\int\limits_{0}^{t}q(\tau )\cos \left( \mu _{n}\Delta (\tau
)\right) d\tau +O\left( \frac{1}{\mu _{n}^{3}}\right) .  \tag{20}
\end{equation}%
Thus, solving the equation (20), one obtains%
\begin{equation}
t_{n}^{j}=\frac{\left( j-\frac{1}{2}\right) \pi }{\mu _{n}}+\frac{\alpha
_{1}^{+}}{\mu _{n}T\left( \mu _{n},t_{n}^{j}\right) }+\frac{\alpha _{2}^{+}}{%
2\mu _{n}T\left( \mu _{n},t_{n}^{j}\right) }\int\limits_{0}^{t_{n}^{j}}q(%
\tau )\cos \left( \mu _{n}\Delta (\tau )\right) d\tau +O\left( \frac{1}{\mu
_{n}^{3}}\right)  \tag{21}
\end{equation}%
Note that%
\begin{equation}
\mu _{n}^{-1}=\frac{1}{\mu _{n}^{0}}-\frac{\left( \frac{\beta _{1}^{+}}{%
\beta _{2}^{+}}+\frac{\alpha _{1}^{+}}{\alpha _{2}^{+}}+U^{+}\left( n\right)
\right) }{\left( \mu _{n}^{0}\right) ^{3}\pi }+O\left( \frac{1}{n^{4}}%
\right) .  \tag{22}
\end{equation}%
Substituting (22) into (21) we have 
\begin{equation*}
t_{n}^{j}=\left( j-\frac{1}{2}\right) \pi \left[ \frac{1}{\mu _{n}^{0}}-%
\frac{\left( \frac{\beta _{1}^{+}}{\beta _{2}^{+}}+\frac{\alpha _{1}^{+}}{%
\alpha _{2}^{+}}+U^{+}\left( \mu _{n}^{0}\right) \right) }{\left( \mu
_{n}^{0}\right) ^{3}\pi }\right]
\end{equation*}%
\begin{equation}
+\frac{\alpha _{1}^{+}}{\mu _{n}^{0}T_{0}\left( n\right) }+\frac{\alpha
_{2}^{+}}{2\mu _{n}^{0}T_{0}\left( n\right) }\int\limits_{0}^{\frac{j\pi }{n}%
}q(\tau )\cos \left( \mu _{n}^{0}\Delta (\tau )\right) d\tau +O\left( \frac{1%
}{\left( \mu _{n}^{0}\right) ^{3}}\right) ,\text{ \ \ \ }j=\overline{1,\left[
\frac{n}{2}\right] }.  \tag{23}
\end{equation}%
Here $T_{0}\left( n\right) =T\left( \mu _{n}^{0},\frac{j\pi }{n}\right) .$
Similarly, from (9), we get 
\begin{align*}
\left( \prod\limits_{i=1}^{m}\delta _{i}\right) \varphi _{2}(t,\mu _{n})&
=\mu _{n}\alpha _{2}^{+}\cos \left( \mu _{n}t\right) +\alpha _{2}^{-}\cos
\left( \mu _{n}t\right) -\alpha _{1}^{+}\sin \left( \mu _{n}t\right) \\
& -\frac{\alpha _{2}^{+}}{2}\int\limits_{0}^{{t}}q\left( \tau \right) \sin
\mu _{n}\left( t-\Delta \left( \tau \right) \right) d\tau +O\left( \frac{1}{%
\mu _{n}}\right) ,
\end{align*}%
and%
\begin{equation*}
\left( \prod\limits_{i=1}^{m}\delta _{i}\right) \varphi _{2}(t,\mu _{n})=\mu
_{n}\alpha _{2}^{+}\cos \left( \mu _{n}t\right) +\alpha _{2}^{-}\cos \left(
\mu _{n}t\right) -\alpha _{1}^{+}\sin \left( \mu _{n}t\right)
\end{equation*}%
\begin{equation*}
+\frac{\alpha _{2}^{+}\cos \left( \mu _{n}t\right) }{2}\int\limits_{0}^{{t}%
}q\left( \tau \right) \sin \left( \mu _{n}\Delta \left( \tau \right) \right)
d\tau -\frac{\alpha _{2}^{+}\sin \left( \mu _{n}t\right) }{2}%
\int\limits_{0}^{{t}}q\left( \tau \right) \cos \left( \mu _{n}\Delta \left(
\tau \right) \right) d\tau +O\left( \frac{1}{\mu _{n}}\right) .
\end{equation*}%
For nodal points of $\varphi _{2}(t,\mu _{n})$, we have the equality%
\begin{equation*}
0=\alpha _{2}^{+}\cos \left( \mu _{n}t\right) +\frac{\alpha _{2}^{-}}{\mu
_{n}}\cos \left( \mu _{n}t\right) -\frac{\alpha _{1}^{+}}{\mu _{n}}\sin
\left( \mu _{n}t\right)
\end{equation*}%
\begin{equation*}
+\frac{\alpha _{2}^{+}\cos \left( \mu _{n}t\right) }{2\mu _{n}}%
\int\limits_{0}^{{t}}q\left( \tau \right) \sin \left( \mu _{n}\Delta \left(
\tau \right) \right) d\tau -\frac{\alpha _{2}^{+}\sin \left( \mu
_{n}t\right) }{2\mu _{n}}\int\limits_{0}^{{t}}q\left( \tau \right) \cos
\left( \mu _{n}\Delta \left( \tau \right) \right) d\tau +O\left( \frac{1}{%
\mu _{n}^{2}}\right) .
\end{equation*}%
Again, taking $\sin \left( \mu _{n}t\right) \neq 0$ into account for
sufficiently large $n,$ we get%
\begin{equation*}
0=\alpha _{2}^{+}\cot \left( \mu _{n}t\right) +\frac{\alpha _{2}^{-}}{\mu
_{n}}\cot \left( \mu _{n}t\right) -\frac{\alpha _{1}^{+}}{\mu _{n}}
\end{equation*}%
\begin{equation*}
+\frac{\alpha _{2}^{+}\cot \left( \mu _{n}t\right) }{2\mu _{n}}%
\int\limits_{0}^{{t}}q\left( \tau \right) \sin \left( \mu _{n}\Delta \left(
\tau \right) \right) d\tau -\frac{\alpha _{2}^{+}}{2\mu _{n}}%
\int\limits_{0}^{{t}}q\left( \tau \right) \cos \left( \mu _{n}\Delta \left(
\tau \right) \right) d\tau +O\left( \frac{1}{\mu _{n}^{2}}\right) ,
\end{equation*}%
and%
\begin{equation}
\tan \left( \mu _{n}t+\frac{\pi }{2}\right) =\frac{\alpha _{1}^{+}}{\alpha
_{2}^{+}\mu _{n}}+\frac{1}{2\mu _{n}}\int\limits_{0}^{{t}}q\left( \tau
\right) \cos \left( \mu _{n}\Delta \left( \tau \right) \right) d\tau
+O\left( \frac{1}{\mu _{n}^{2}}\right) .  \tag{24}
\end{equation}%
Thus, solving the equation (24), one obtains%
\begin{equation}
t_{n}^{j}=\frac{\left( j-\frac{1}{2}\right) \pi }{\mu _{n}}+\frac{\alpha
_{1}^{+}}{\alpha _{2}^{+}\mu _{n}^{2}}+\frac{1}{2\mu _{n}^{2}}%
\int\limits_{0}^{t_{n}^{j}}q\left( \tau \right) \cos \left( \mu _{n}\Delta
\left( \tau \right) \right) d\tau +O\left( \frac{1}{\mu _{n}^{3}}\right) . 
\tag{25}
\end{equation}%
Note that%
\begin{equation}
\mu _{n}^{-2}=\frac{1}{\left( \mu _{n}^{0}\right) ^{2}}+O\left( \frac{1}{%
n^{4}}\right) .  \tag{26}
\end{equation}%
Substituting (26) into (25) we have 
\begin{equation*}
t_{n}^{j}=\left( j-\frac{1}{2}\right) \pi \left( \frac{1}{\mu _{n}^{0}}-%
\frac{\left( \frac{\beta _{1}^{+}}{\beta _{2}^{+}}+\frac{\alpha _{1}^{+}}{%
\alpha _{2}^{+}}+U^{+}\left( \mu _{n}^{0}\right) \right) }{\left( \mu
_{n}^{0}\right) ^{3}\pi }\right)
\end{equation*}%
\begin{equation}
+\frac{1}{\left( \mu _{n}^{0}\right) ^{2}}\left( \frac{\alpha _{1}^{+}}{%
\alpha _{2}^{+}}+\frac{1}{2}\int\limits_{0}^{\frac{j\pi }{n}}q\left( \tau
\right) \cos \left( \mu _{n}^{0}\Delta \left( \tau \right) \right) d\tau
\right) +O\left( \frac{1}{\left( \mu _{n}^{0}\right) ^{3}}\right) ,\text{ \
\ \ }j=\overline{\left[ \frac{n}{2}\right] +1,n}.  \tag{27}
\end{equation}%
Thus we have proven the following theorem:

\begin{theorem}
For sufficiently large $n$, we have the \ formulas (23) and (27) of the
nodal points for the problem (1)-(5).
\end{theorem}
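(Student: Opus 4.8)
The plan is to read the nodal points off directly from the asymptotic form of the eigenfunction on each subinterval, since the vanishing condition $\varphi(t_n^j,\mu_n)=0$ together with the eigenvalue asymptotics of Theorem 2.1 determines them to the required order. First I would take the integral equation (8) for $\varphi_1$ and expand it by successive approximation, as already done in Section 2. The one mildly delicate input is that the oscillatory integrals carrying the phase $2\tau-\Delta(\tau)$ are $O(1/\mu)$, which is the cited estimate from [3, Lemma 2.3.3]; this lets me discard them to the order needed. After collecting the surviving terms and setting $\mu=\mu_n$, I obtain the expansion of $\varphi_1(t,\mu_n)$ in powers of $\mu_n^{-1}$ whose coefficients are built from $\int_0^t q(\tau)\cos(\mu_n\Delta(\tau))\,d\tau$ and $\int_0^t q(\tau)\sin(\mu_n\Delta(\tau))\,d\tau$.

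Next I would impose $\varphi_1(t_n^j,\mu_n)=0$. Since $\sin(\mu_n t)\neq 0$ at a nodal point for large $n$, I can divide by $\sin(\mu_n t)$, group the coefficient of $\cos(\mu_n t)$ into the quantity $T(\mu_n,t)$ defined in the text, and rewrite the vanishing condition as the single transcendental equation (20). Because the right-hand side of (20) is $O(\mu_n^{-1})$, the argument $\mu_n t+\tfrac{\pi}{2}$ must lie within $O(\mu_n^{-1})$ of an integer multiple of $\pi$; writing $\mu_n t_n^j+\tfrac{\pi}{2}=j\pi+O(\mu_n^{-1})$ and inverting $\tan$ to first order yields $t_n^j=(j-\tfrac12)\pi/\mu_n+(\text{correction})/\mu_n+O(\mu_n^{-3})$, which is exactly (21). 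This perturbative inversion is where I expect the bookkeeping to be most error-prone: one must carry the factor $T(\mu_n,t_n^j)$ evaluated at the unknown nodal point, and verify that replacing $t_n^j$ by its leading value $j\pi/n$ inside the slowly varying $\Delta$-integrals perturbs the result only at order $O(\mu_n^{-3})$.

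To convert (21) into the stated formula (23), I would substitute the reciprocal eigenvalue asymptotics (22), which follow from (17) of Theorem 2.1, replace $T(\mu_n,t_n^j)$ by $T_0(n)=T(\mu_n^0,j\pi/n)$, and evaluate the $\Delta$-integrals at $\mu_n^0$, absorbing every discrepancy into the $O((\mu_n^0)^{-3})$ remainder. The restriction $j=\overline{1,[n/2]}$ records that these nodal points lie in $\Omega_0=[0,\theta_1)$, where $\varphi=\varphi_1$. For the remaining nodal points, which lie in the adjacent subinterval and where $\varphi=\varphi_2$, I would run the identical argument starting from (9): the transmission factor $\prod_{i=1}^m\delta_i$ cancels out of the vanishing condition, the same oscillatory-integral simplification produces (24), inverting it gives (25), and substituting (26) yields (27) for $j=\overline{[n/2]+1,n}$. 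Since both formulas are thereby derived under the sole hypothesis that $n$ is large enough to guarantee $\sin(\mu_n t_n^j)\neq 0$ and to validate the Rouch\'e and residue estimates underlying (17), the theorem follows.
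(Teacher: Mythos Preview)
Your proposal is correct and follows essentially the same route as the paper: expand $\varphi_1$ from (8) via successive approximation, discard the $2\tau-\Delta(\tau)$ oscillatory integrals using the cited lemma from [3], divide the vanishing condition by $\sin(\mu_n t)$ to obtain (20), invert to (21), and then substitute the eigenvalue asymptotics (22) to reach (23); the argument for $\varphi_2$ via (9) leading to (24)--(27) is likewise identical. Your additional remarks on the bookkeeping---that replacing $t_n^j$ by its leading value $j\pi/n$ inside the slowly varying integrals and replacing $T(\mu_n,t_n^j)$ by $T_0(n)$ perturbs only at order $O((\mu_n^0)^{-3})$---make explicit a step the paper passes over silently.
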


We see that there exists $N_{0}$ such that for all $n>N_{0}$ the
eigenfunction $\varphi \left( t,\mu _{n}\right) $ of the problem has exactly 
$n$ simple nodes in the interval $\left( 0,\pi \right) .$ The set $\Lambda
=\left\{ t_{n}^{j}\right\} $ is called the nodal set of the problem (1)-(5).
We also define the function $j_{n}\left( t\right) $ to be the largest index $%
j$ such that $0\leq t_{n}^{j}\leq t.$ Thus, $j=j_{n}\left( t\right) $ if and
only if $t\in \left[ t_{n}^{j},t_{n}^{j+1}\right) .$

\begin{theorem}
For each $t\in \left[ 0,\pi \right] $, let $\left\{ t_{n}^{j}\right\}
\subset \Lambda $ be chosen such that $\lim_{n\rightarrow \infty
}t_{n}^{j}=t.$ Then the following finite limit exists and corresponding
equality holds:%
\begin{equation}
\lim_{n\rightarrow \infty }\left( \mu _{n}^{0}\right) ^{2}\left[ t_{n}^{j}-%
\frac{\left( j-\frac{1}{2}\right) \pi }{\mu _{n}^{0}}\right] =f\left(
t\right)  \tag{28}
\end{equation}%
and%
\begin{equation}
f(t)=\left\{ 
\begin{array}{cc}
\frac{\left( \frac{\beta _{1}^{+}}{\beta _{2}^{+}}+\frac{\alpha _{1}^{+}}{%
\alpha _{2}^{+}}+U^{+}\left( 0\right) \right) t}{\pi }-\frac{\alpha _{1}^{+}%
}{\alpha _{2}^{+}}-\frac{1}{2}\int\limits_{0}^{t}q\left( \tau \right) d\tau ,
& \Delta \left( \tau \right) =0, \\ 
\frac{\left( \frac{\beta _{1}^{+}}{\beta _{2}^{+}}+\frac{\alpha _{1}^{+}}{%
\alpha _{2}^{+}}\right) t}{\pi }-\frac{\alpha _{1}^{+}}{\alpha _{2}^{+}}, & 
\Delta \left( \tau \right) \neq 0.%
\end{array}%
\right.  \tag{29}
\end{equation}
\end{theorem}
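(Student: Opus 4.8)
The plan is to read the limit straight off the nodal asymptotics (23) and (27) of Theorem 2.2, so that no fresh analysis of the eigenfunctions is required: the whole content is a careful passage to the limit in those two formulas. The first thing I would record is the book-keeping relation between the index $j$ and the limit point $t$. Since $t_n^j\to t$ and the leading term of $t_n^j$ is $(j-\tfrac12)\pi/\mu_n^0$, we get $(j-\tfrac12)\pi/\mu_n^0\to t$, hence $(j-\tfrac12)/\mu_n^0\to t/\pi$; and because $\mu_n^0=n-1\sim n$, the moving upper limit of integration also satisfies $j\pi/n\to t$. This is exactly what converts the $\mu_n^0$-dependent quantities in (23) and (27) into functions of $t$ in the limit.

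Next, in each of the two index ranges I would subtract the leading term $(j-\tfrac12)\pi/\mu_n^0$ and multiply by $(\mu_n^0)^2$. Using the definition of $T$ one has $T_0(n)=\mu_n^0\alpha_2^+\bigl(1+O(1/\mu_n^0)\bigr)$, so that $\frac{\alpha_1^+}{\mu_n^0 T_0(n)}=\frac{\alpha_1^+}{\alpha_2^+(\mu_n^0)^2}+o\bigl((\mu_n^0)^{-2}\bigr)$ and the integral term in (23) collapses to $\frac{1}{2(\mu_n^0)^2}\int_0^{j\pi/n}q(\tau)\cos(\mu_n^0\Delta(\tau))\,d\tau$ to the same order; the corresponding simplification in (27) is immediate. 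After multiplying by $(\mu_n^0)^2$ the three surviving contributions are the eigenvalue-correction term $-\frac{j-1/2}{\mu_n^0}\bigl(\frac{\beta_1^+}{\beta_2^+}+\frac{\alpha_1^+}{\alpha_2^+}+U^+(\mu_n^0)\bigr)$, the constant $\frac{\alpha_1^+}{\alpha_2^+}$, and the half-integral $\frac12\int_0^{j\pi/n}q(\tau)\cos(\mu_n^0\Delta(\tau))\,d\tau$, while everything else is $o(1)$. I would also check that (23) and (27) produce the same three terms, so that the limit function $f$ is well defined across the junction between the two ranges of $j$.

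Finally I would split on $\Delta$. If $\Delta\equiv 0$, then $\cos(\mu_n^0\Delta)\equiv 1$, so $U^+(\mu_n^0)=U^+(0)=\tfrac12\int_0^\pi q$, and $\int_0^{j\pi/n}q\,\cos(\mu_n^0\Delta)\to\int_0^t q$ by continuity of the integral in its upper limit together with the convergence $j\pi/n\to t$; combined with the first step this yields the first branch of (29). If $\Delta\not\equiv 0$, the oscillatory integrals $U^+(\mu_n^0)$ and $\int_0^{j\pi/n}q(\tau)\cos(\mu_n^0\Delta(\tau))\,d\tau$ tend to $0$ by a Riemann--Lebesgue argument, leaving only the non-oscillatory pieces, i.e. the second branch of (29). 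The existence of the finite limit (28) is then automatic, since every term either converges or is $o(1)$.

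I expect the main obstacle to be the rigorous interchange of limit and integration: one must simultaneously let the upper limit $j\pi/n\to t$ and control the oscillatory factor $\cos(\mu_n^0\Delta(\tau))$. The vanishing of these integrals when $\Delta\not\equiv 0$ is the same Riemann--Lebesgue phenomenon already invoked for the phase $2\tau-\Delta(\tau)$ through [3, Lemma 2.3.3], and it tacitly needs enough regularity of $\Delta$ (e.g. piecewise monotonicity with non-vanishing derivative) for that lemma to apply; the degenerate case in which $\Delta$ is a nonzero constant would have to be excluded or treated separately. A secondary point is to keep careful track of signs in the rescaling, since the $(\mu_n^0)^2$ factor must be combined with the sign of the eigenvalue correction coming from (17) in order to land exactly on (29).
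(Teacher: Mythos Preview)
Your proposal is correct and follows exactly the approach of the paper: invoke the nodal asymptotics (23) and (27), subtract the leading term, rescale by $(\mu_n^0)^2$, and let $n\to\infty$ using $t_n^j\to t$. In fact the paper's proof consists of nothing more than the single sentence ``Using the formulas (23) and (27) for nodal points and the fact that $\lim_{n\to\infty}t_n^j=t$, it follows that as $n\to\infty$ the limit on the left-hand side in (28) exists and Eq.~(29) holds,'' so your write-up---with its explicit treatment of $T_0(n)\sim\mu_n^0\alpha_2^+$, the convergence $j\pi/n\to t$, the Riemann--Lebesgue step for $\Delta\not\equiv 0$, and the caution about signs---is considerably more detailed than what the paper itself provides.
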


\begin{proof}
Using the formulas (23) and (27) for nodal points and the fact that $%
\lim_{n\rightarrow \infty }t_{n}^{j}=t,$ it follows that as $n\rightarrow
\infty $ the limits of left-hand side in (28) exists and Eq. (29) holds.
Thus, the proof is completed.
\end{proof}

Now, we can construct the potential function $q\left( t\right) $ via
following theorem.

\begin{theorem}
Let $\Lambda ^{0}=\left\{ t_{n_{k}}^{j}\right\} $ and $\Lambda ^{0}\subset
\Lambda $ be a subset of nodal points which satisfy $\left\{
t_{n_{k}}^{j}\right\} $ is dense in $\left( 0,\pi \right) .$ For each $t\in %
\left[ 0,\pi \right] $ choose a sequence $\left\{ t_{n}^{j}\right\} \subset
\Lambda ^{0}$ such that $\lim_{n\rightarrow \infty }t_{n}^{j}=t.$ If $\Delta
\left( \tau \right) =0,$ then the function $q\left( t\right) $ can be
written as%
\begin{equation*}
q(t)=\frac{2}{\pi }\left[ U^{+}\left( 0\right) +f(\pi )-f(0)\right]
-2f^{\prime }(t).
\end{equation*}
\end{theorem}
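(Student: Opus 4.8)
The plan is to work entirely from the first branch of formula (29), which gives an explicit closed form for the limit function $f(t)$ of (28) in the case $\Delta(\tau)=0$, and to recover $q$ from it by a single differentiation together with an evaluation at the two endpoints $t=0$ and $t=\pi$. Since $U^{+}(0)=\tfrac{1}{2}\int_{0}^{\pi}q(\tau)\,d\tau$ does not depend on $t$, differentiating the first line of (29) and using only $\tfrac{d}{dt}\int_{0}^{t}q(\tau)\,d\tau=q(t)$ gives
\[
f^{\prime}(t)=\frac{1}{\pi}\left(\frac{\beta_{1}^{+}}{\beta_{2}^{+}}+\frac{\alpha_{1}^{+}}{\alpha_{2}^{+}}+U^{+}(0)\right)-\frac{1}{2}q(t).
\]
Solving this linear relation for $q$ yields
\[
q(t)=\frac{2}{\pi}\left(\frac{\beta_{1}^{+}}{\beta_{2}^{+}}+\frac{\alpha_{1}^{+}}{\alpha_{2}^{+}}+U^{+}(0)\right)-2f^{\prime}(t),
\]
so the whole task reduces to re-expressing the boundary-coefficient combination $\tfrac{\beta_{1}^{+}}{\beta_{2}^{+}}+\tfrac{\alpha_{1}^{+}}{\alpha_{2}^{+}}$, which is not directly visible from the nodes, in terms of quantities determined by the nodal data alone.

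Next I would evaluate (29) at the endpoints. At $t=0$ both the linear term and the integral vanish, so that $f(0)=-\tfrac{\alpha_{1}^{+}}{\alpha_{2}^{+}}$. At $t=\pi$ the linear term contributes $\tfrac{\beta_{1}^{+}}{\beta_{2}^{+}}+\tfrac{\alpha_{1}^{+}}{\alpha_{2}^{+}}+U^{+}(0)$, while the identification $\tfrac{1}{2}\int_{0}^{\pi}q(\tau)\,d\tau=U^{+}(0)$ cancels the $U^{+}(0)$ term against the integral, leaving $f(\pi)=\tfrac{\beta_{1}^{+}}{\beta_{2}^{+}}$. Subtracting produces the crucial identity
\[
f(\pi)-f(0)=\frac{\beta_{1}^{+}}{\beta_{2}^{+}}+\frac{\alpha_{1}^{+}}{\alpha_{2}^{+}}.
\]
Substituting this into the expression for $q$ above immediately gives $q(t)=\tfrac{2}{\pi}\left[U^{+}(0)+f(\pi)-f(0)\right]-2f^{\prime}(t)$, which is exactly the asserted formula.

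The only genuinely substantive step is this endpoint evaluation combined with the observation that $U^{+}(0)=\tfrac12\int_{0}^{\pi}q$; that is precisely what lets the inaccessible coefficient combination $\tfrac{\beta_{1}^{+}}{\beta_{2}^{+}}+\tfrac{\alpha_{1}^{+}}{\alpha_{2}^{+}}$ be replaced by the difference $f(\pi)-f(0)$ of values of the nodally determined function $f$. Once this replacement is in place the reconstruction of $q$ is purely algebraic. I would note finally that the regularity needed to form $f^{\prime}$ follows from the continuity of $q$ assumed for problem (1)--(5) and from the explicit affine-plus-integral form of $f$ in (29), so that no analytic input beyond the theorem establishing (28)--(29) is required.
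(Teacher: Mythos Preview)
Your argument is correct. The paper states this theorem without proof (it merely appends ``Here, $f(t)$ is defined by (29)'' after the statement), and your derivation --- differentiating the $\Delta(\tau)=0$ branch of (29) and then eliminating the combination $\tfrac{\beta_{1}^{+}}{\beta_{2}^{+}}+\tfrac{\alpha_{1}^{+}}{\alpha_{2}^{+}}$ via the endpoint identity $f(\pi)-f(0)=\tfrac{\beta_{1}^{+}}{\beta_{2}^{+}}+\tfrac{\alpha_{1}^{+}}{\alpha_{2}^{+}}$ --- is exactly the computation that the stated formula encodes.
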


Here, $f(t)$ is defined by (29).

Now, we will give a few numerical examples to verify and illustrate the
results. For each example we shall display spectrum, the regularized trace
and the solution of inverse nodal problem. In each example we will take $%
n=40 $.

\textbf{Example 1.} Consider the following problem $L_{1}:$

\begin{equation}
y^{\prime \prime }(t)+ty(\frac{t}{2})+\mu ^{2}y(t)=0,\text{ }t\in \Omega =%
\left[ 0,1\right) \cup \left( 1,\pi \right] ;  \tag{30}
\end{equation}%
\begin{equation}
y(0)+8y^{\prime }(0)+\mu y^{\prime }(0)=0,  \tag{31}
\end{equation}%
\begin{equation}
y\left( \pi \right) +\frac{1}{10}y^{\prime }\left( \pi \right) +\mu
y^{\prime }\left( \pi \right) =0,  \tag{32}
\end{equation}%
\begin{equation}
y\left( 1-\right) =y(1+),  \tag{33}
\end{equation}%
\begin{equation}
y^{\prime }\left( 1-\right) =y^{\prime }(1+).  \tag{34}
\end{equation}%
Namely; $\Delta (t)=\frac{t}{2},$ $\alpha _{1}^{+}=0,$ $\alpha _{1}^{-}=1,$ $%
\alpha _{2}^{-}=-8,$ $\alpha _{2}^{+}=-1,$ $\beta _{1}^{+}=0,$ $\beta
_{2}^{+}=-1,$ $\beta _{1}^{-}=1,$ $\beta _{2}^{-}=-0.1$ and $\delta _{1}=1$.
For the spectrum of the problem $L_{1},$ since $U^{+}\left( 39\right)
=0.00065746219$ and $V^{+}\left( 39\right) =0.02670511654$.from Theorem 2.1,
we have

\begin{equation*}
\mu _{40}=38.9997766723+O\left( 0.00065746219\right) .
\end{equation*}%
From (19), since $U^{+}\left( 0\right) =V^{+}\left( 0\right) =4.93480220054$
for the regularized trace ($tr$=trace), we obtain $trL_{1}=58.3910062461$.
Now, consider the differential equation (take $\Delta =0$ in (30))%
\begin{equation*}
y^{\prime \prime }(t)+q(t)y(t)+\mu ^{2}y(t)=0,\text{\ }t\in \Omega
\end{equation*}%
with the boundary conditions (31)-(32) and transmission conditions
(33)-(34). Thus, from Theorem 3.1 and Theorem 3.2, for the solution of
inverse problem we find $q\left( t\right) =t$ in $\Omega $.

\textbf{Example 2.} Consider the problem $L_{2}:$%
\begin{equation*}
y^{\prime \prime }(t)+q(t)y(t)+\mu ^{2}y(t)=0,\text{ }t\in \Omega =\left[
0,1.5\right) \cup \left( 1.5,2\right) \cup \left( 2,\pi \right]
\end{equation*}%
\begin{eqnarray*}
\left( 3\mu +2\right) y(0)-\left( 7\mu +4\right) y^{\prime }\left( 0\right)
&=&0, \\
\left( \mu -5\right) y\left( \pi \right) -\left( \mu +0.3\right) y^{\prime
}\left( \pi \right) &=&0,
\end{eqnarray*}%
\begin{equation*}
y\left( 1.5-\right) -2y(1.5+)=0,
\end{equation*}%
\begin{equation*}
y^{\prime }\left( 1.5-\right) -2y^{\prime }(1.5+)=0,
\end{equation*}%
\begin{equation*}
y\left( 2-\right) -8y(2+)=0,
\end{equation*}%
\begin{equation*}
y^{\prime }\left( 2-\right) -8y^{\prime }(2+)=0.
\end{equation*}%
Here , $\Delta (t)=0$, $\alpha _{1}^{+}=3,$ $\alpha _{1}^{-}=2,$ $\alpha
_{2}^{-}=4,$ $\alpha _{2}^{+}=7,$ $\beta _{1}^{+}=1,$ $\beta _{1}^{-}=-5,$ $%
\beta _{2}^{+}=1,$ $\beta _{2}^{-}=0.3$, $\delta _{1}=2$ and $\delta _{2}=8$%
. If $U^{+}\left( 39\right) =11.0703463164$ and $V^{+}\left( 39\right)
=0.00181958345$ then we have $trL_{2}=-569.751286593$ and

\begin{equation*}
\mu _{40}=37.9930198832+O\left( 0.00065746219\right) .
\end{equation*}%
Consequently, from Theorem 3.1 and Theorem 3.2, for the solution of inverse
problem we have $q\left( t\right) =e^{t}$ in $\Omega $.


\end{document}